\newtheorem{theo}{Theorem}
\newtheorem{conj}{Conjecture}
\newtheorem{prop}{Proposition}
\def\x{{\sf x}}
\def\ox{\overline{\sf x}}
\def\y{{\sf y}}
\def\oy{\overline{\sf y}}
\def\z{{\sf z}}
\def\oz{\overline{\sf z}}
\def\C{{\sf C}}
\def\t{{\sf true}}
\def\f{{\sf false}}
\title{On Frank's conjecture on $k$-connected orientations}
\author{Olivier Durand de Gevigney\thanks{Laboratoire G-SCOP, CNRS,
Grenoble-INP, UJF, France} \thanks{This research was conducted while the author
was visiting the University of Waterloo. The author was supported by a grant
Explora Doc from Rh\^one-Alpes and NSERC grant No. OGP0138432.}}
\begin{document}

\maketitle

\begin{abstract}
  We disprove a conjecture of Frank \cite{Frank1995} stating that each weakly
  $2k$-connected graph has a $k$-vertex-connected orientation. For $k \geq 3$,
  we also prove that the problem of deciding whether a graph has a
  $k$-vertex-connected orientation is NP-complete.
\end{abstract}

\section*{Introduction}

An \emph{orientation} of an undirected graph $G$ is a digraph obtained from
$G$ by substituting an arc $uv$ or $vu$ for every edge $uv$ in $G$. We are
interested in characterizing graphs admitting an orientation that satisfies
connectivity properties.
Robbins \cite{Robbins1939} proved that a graph $G$ admits a strongly connected
orientation if and only if $G$ is $2$-edge-connected. The following extension to
higher connectivity follows from of a result of Nash-Williams
\cite{NashWilliams1960}: a graph $G$ admits a $k$-arc-connected orientation if
and only if $G$ is $2k$-edge-connected.

Little is known about vertex-connected orientations. Thomassen
\cite{Thomassen1989} conjectured that if a graph has sufficiently high
vertex-connectivity then it admits a $k$-vertex-connected orientation.
\begin{conj}[Thomassen \cite{Thomassen1989}]
  For every positive integer $k$ there exists an integer $f(k)$ such that every
  $f(k)$-connected graph admits a $k$-connected orientation.
\end{conj}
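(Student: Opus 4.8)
The plan is to prove the conjecture by induction on $k$ and, for each fixed $k$, by a second induction on the number of vertices --- in the spirit of the proof that every $2k$-edge-connected graph has a $k$-arc-connected orientation, but carrying vertex-separators through the argument instead of edge-cuts. The base case $k=1$ is exactly Robbins' theorem, so $f(1)=2$. For the inductive step I would fix the value $f(k-1)$ given by the induction hypothesis and choose $f(k)$ to be a sufficiently large function of $f(k-1)$ and $k$; the goal, given an $f(k)$-connected graph $G$, is to reduce it to a strictly smaller graph that still satisfies the hypotheses needed to apply induction, orient that graph, and lift the orientation back to $G$.

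First I would pass to a more structured graph: it suffices to orient a spanning $f(k)$-connected subgraph of $G$, since adding the remaining edges back with arbitrary orientations cannot decrease connectivity, so by the classical fact that a minimally $c$-connected graph has a vertex of degree $c$ I may assume $G$ has a vertex $v$ of degree exactly $f(k)$. I would then \emph{split off} edges at $v$ in pairs, replacing two edges $uv,wv$ by a single edge $uw$, to obtain a graph $G'$ with $|V(G')|=|V(G)|-1$, chosen so that $G'$ is again orientation-feasible (ideally $f(k)$-connected, or at least admitting a $k$-connected orientation directly). After orienting $G'$ so that it is $k$-connected, I would reverse the splittings: an edge $uw$ oriented $u\to w$ becomes the directed path $u\to v\to w$, and a possible leftover edge at $v$ is oriented arbitrarily. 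Reinserting $v$ is then essentially routine, provided the pairing was chosen to ``spread'' over distinct neighbours: for any vertex set $X$ with $|X|\le k-1$, the digraph $\vec{G}-X$ arises from a strongly connected digraph either by attaching the bounded-degree vertex $v$, which retains at least one in-neighbour and one out-neighbour outside $X$, or --- when $v\in X$ --- by deleting $v$ together with a bounded number of arcs spread over many vertices, which is harmless once the ambient arc-connectivity is large enough.

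The hard part --- and the reason the conjecture is still open --- is the splitting-off step. Lov\'asz's splitting theorem and its directed analogues preserve edge- and arc-connectivity because those quantities are governed by a submodular cut function to which uncrossing applies; vertex-connectivity admits no such submodular description, and an \emph{admissible} pair of edges at $v$ (one whose splitting introduces no new small separator) need not exist in an arbitrary highly connected graph. One would therefore have to prove a bespoke statement of the form: in every $f(k)$-connected graph, at every vertex of bounded degree some pair of incident edges can be split off while keeping the graph $k$-connected. All the difficulty sits in the ``dangerous'' vertex sets, i.e.\ the small separators that a given splitting would merge, and controlling them seems to demand genuinely new structural information about separators in highly connected graphs (Mader-type theorems on non-separating induced paths, contractible edges, or connectivity-preserving contractions) rather than any counting argument. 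Indeed, the present paper's disproof of Frank's conjecture shows $f(k)\ge 2k+1$, so any approach that only ever sees edge-connectivity --- for which $2k$ already suffices by Nash-Williams --- is doomed: the extra vertex structure is genuinely necessary.

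As a first manageable case I would handle $k=2$ on its own. A $2$-connected graph has an open ear decomposition, and one can try to orient it ear by ear --- grouping consecutive ears so that each group, once oriented, is a strongly connected subgraph that stays strongly connected after deleting any single internal vertex --- which reduces the problem to choosing orientations consistently along the ears. For the Eulerian subcase I would follow Berg and Jord\'an's theorem that every sufficiently connected Eulerian graph has a $2$-connected orientation: with every degree even there are no parity obstructions to splitting, the splitting-off step goes through, and the $2$-connected orientation is obtained. Pushing this from the Eulerian case to all graphs, and then from $k=2$ to general $k$, is precisely where the vertex-connectivity splitting lemma described above would be indispensable.
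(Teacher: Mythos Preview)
The statement you are attempting to prove is Thomassen's \emph{conjecture}; the paper states it as an open problem and gives no proof. It explicitly notes that only the case $k=2$ is known (via Jord\'an, with $f(2)\le 18$) and that the conjecture remains open for $k\ge 3$. There is therefore no ``paper's own proof'' to compare your proposal against.

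Your write-up is not a proof either, and you say so yourself: the crucial step is a vertex-connectivity analogue of Lov\'asz splitting-off, and you correctly identify this as the obstacle (``the reason the conjecture is still open''). Everything upstream and downstream of that step is standard scaffolding; the missing lemma is the whole problem. One further inaccuracy: you claim that the paper's disproof of Frank's conjecture yields $f(k)\ge 2k+1$. It does not. The counterexamples $G_k$ are only \emph{weakly} $2k$-connected; they contain vertices of degree $2k$ with far fewer than $2k$ distinct neighbours (e.g.\ $x$ has only about $k+1$ neighbours), so they are not $2k$-vertex-connected and say nothing about $f(k)$ in Thomassen's sense.
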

The case $k=2$ has been proved by Jord\'an \cite{Jordan2005} by showing $f(2)
\leq 18$. Recently, it was shown in \cite{CheriyanDdGSzigeti} that $f(2) \leq
14$. However, the conjecture of Thomassen remains open for $k \geq 3$. 

A graph $G=(V,E)$ is called \emph{weakly $2k$-connected} if $|V|>k$ and for all
$U \subseteq V$ and $F \subseteq E$ such that $2|U|+|F|<2k$, the graph
$G-U-E$ is connected. It is easy to see that any graph
admitting a $k$-connected orientation is weakly $2k$-connected. Note that
checking the weak $2k$-connectivity of a graph can be done in polynomial time
using a variation of the Max-flow Min-cut algorithm \cite{FordFulkerson1962}.
Frank \cite{Frank1995} conjectured that this connectivity condition
characterizes graphs admitting a $k$-connected orientation.
\begin{conj}[Frank \cite{Frank1995}]\label{FrankC}
  A graph $G$ admits a $k$-connected orientation if and only if $G$ is
  weakly $2k$-connected.
\end{conj}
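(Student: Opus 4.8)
The statement combines an easy, known implication — every $k$-connected orientation is weakly $2k$-connected — with a converse that I do not expect to hold; the companion NP-completeness result already suggests that $k$-connected orientability is strictly harder to certify than the polynomial-time condition of weak $2k$-connectivity. So the plan is not to prove the statement but to \emph{disprove} it, by exhibiting a weakly $2k$-connected graph that admits no $k$-connected orientation. Because the gap between the two notions appears to be genuine, such a graph should be engineered from combinatorial gadgets rather than found by ad hoc inspection.

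The first step would be to build a \emph{variable gadget} $H_{\x}$: a highly connected piece containing a distinguished edge $e_{\x}$ whose orientation, in every $k$-connected orientation of an ambient graph containing $H_{\x}$, is forced to be one of exactly two possibilities, to be read as $\x=\t$ or $\x=\f$; moreover this value should be forced to propagate identically to every copy of a literal of $\x$ used later in the construction. The forcing mechanism is the defining property of a $k$-connected orientation $D$: for each set $S$ of $k-1$ vertices, $D-S$ is strongly connected. I would lay out the vertices of $H_{\x}$ so that deleting a suitably chosen $(k-1)$-set nearly separates the gadget, and so that any orientation of $H_{\x}$ other than the two intended ones leaves some $D-S$ with no arc crossing one side of such a near-separation, hence not strongly connected. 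A symmetric design would give a \emph{clause gadget} $H_{\C}$ for a $3$-clause $\C$, attached to the three relevant variable gadgets so that the partial orientation inside $H_{\C}$ extends to a $k$-connected orientation of the whole graph precisely when at least one incoming literal-edge carries the value $\t$.

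Wiring together $n$ variable gadgets and $m$ clause gadgets — with enough additional edges and vertices that the resulting graph stays weakly $2k$-connected whatever the formula is — would produce a graph $G_\Phi$ whose $k$-connected orientations correspond bijectively to the satisfying assignments of a $3$-CNF formula $\Phi$. Feeding in an \emph{unsatisfiable} $\Phi$ then yields a weakly $2k$-connected graph with no $k$-connected orientation, refuting the converse, while feeding in an arbitrary $\Phi$ gives the NP-hardness reduction for $k\ge 3$ (membership in NP being immediate, since $k$-connectivity of a digraph can be tested in polynomial time). The main obstacle will be the tension pulling in two directions simultaneously: the gadgets must be \emph{rigid}, every illegal orientation being killed by some $(k-1)$-vertex deletion, which demands a precise description of exactly which such deletions disconnect each gadget; yet the assembled graph must remain weakly $2k$-connected, which pushes toward more edges and vertices everywhere. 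Securing both at once, and then verifying the (routine but delicate) weak $2k$-connectivity of $G_\Phi$ by examining its small mixed cuts gadget by gadget, is where essentially all the effort will go, and the precise range of $k$ for which the construction works cleanly — in particular whether small $k$ needs separate handling — should emerge from that analysis.
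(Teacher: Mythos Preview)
Your high-level plan---build variable and clause gadgets, verify weak $2k$-connectivity unconditionally, then feed in an unsatisfiable formula---is exactly the paper's route in its NP-completeness section, so the strategy is sound. There is, however, one concrete obstruction in your description. You ask for a clause gadget that extends to a $k$-connected orientation ``precisely when at least one incoming literal-edge carries the value $\t$,'' i.e.\ ordinary \textsc{3-Sat}. But $k$-connectivity of a digraph is invariant under reversing \emph{all} arcs, so the set of $k$-connected orientations of any graph is closed under global reversal; in your encoding this sends an assignment $\alpha$ to its complement $\bar\alpha$. A clause gadget accepting exactly the assignments with a true literal therefore cannot exist: its reverse would accept exactly the assignments with a \emph{false} literal, and these two sets do not coincide. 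The paper resolves this by reducing from \textsc{Not-All-Equal 3-Sat}, whose satisfying assignments are closed under complementation; the clause vertex $w^\C$ is then required to have both an outgoing and an incoming special arc, a reversal-symmetric condition.

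It is also worth noting that the paper first disproves the conjecture with a small explicit graph $G_3$ (and analogous $G_k$ for $k\ge4$), with no SAT machinery at all. The engine is an elementary rigidity fact you allude to but do not isolate: at a vertex $v$ of degree exactly $2k$, any $k$-connected orientation forces $\rho(v)=\delta(v)=k$, and any pair of parallel edges at $v$ must receive opposite directions. Threading a path through several such vertices forces that path to become a \emph{directed} path in every $k$-connected orientation; a single well-placed $(k-1)$-cut then kills both of its two possible directions. This is precisely the ``deleting a suitably chosen $(k-1)$-set nearly separates the gadget'' idea from your proposal, used directly to produce a ten-vertex counterexample rather than routed through an entire reduction.
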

Berg and Jord\'an \cite{BergJordan2006} proved this conjecture for the special
case of Eulerian graphs and $k=2$. For a short proof of this result, see
\cite{KiralySzigeti2006}. In this article we disprove this conjecture for $k
\geq 3$. For instance, the graph $G_3$ in Figure \ref{fg:kEqual} is a
counterexample for $k=3$. We also prove that deciding whether a given graph has
a $k$-connected orientation is NP-complete for $k \geq 3$. Both these results
hold also for the special case of Eulerian graphs. Hence assuming $P \neq NP$,
there is no good characterisation of graphs admitting a $k$-connected
orientation for $k\geq 3$. We mention that counterexamples can easily be derived
from our NP-completeness proof, but we give simple self-contained
counterexamples. Furthermore, the gadgets used in the NP-completeness proof are
based on properties used in our first counterexample.

This paper is organized as follows. In Section \ref{pre} we establish the
necessary definitions and some elementary results. In Section \ref{ce} we
disprove Conjecture \ref{FrankC} for $k \geq 3$. For $k \geq 4$, we provide
Eulerian counterexamples. 
In Section \ref{npc}, for $k \geq 3$, we reduce the problem of
\textsc{Not-All-Equal $3$-Sat} to the problem of finding a $k$-connected
orientation of a graph. This reduction leads to an Eulerian counterexample of
Conjecture \ref{FrankC} for $k=3$.

\section{Preliminaries}\label{pre}

Let $k$ be a positive integer and let $D=(V,A)$ be a digraph. We mention that
digraphs may have multiple arcs. In $D$ the indegree (respectively, the outdegree) of a
vertex $v$ is denoted by $\rho_D(v)$ (respectively,  by $\delta_D(v)$). 
The pair $u,v \in V$ is called \emph{strongly connected} if there exist a dipath
from $u$ to $v$ and a dipath from $v$ to $u$. The digraph $D$ is called strongly
connected if every pair of vertices is strongly connected.
The pair $u,v \in V$ is called \emph{$k$-connected} if, for all $U \subseteq V
\setminus \{u,v\}$ such that $|U|<k$, $u$ and $v$ are strongly connected in the
digraph $D-U$. A set of vertices is called $k$-connected if
every pair of vertices contained in this set is $k$-connected. The digraph $D$
is called $k$-connected if $|V|>k$ and $V$ is $k$-connected.

Let $G=(V,E)$ be a graph. We mention that graphs may have multiple edges. In $G$
the degree of a vertex $v$ is denoted by $d_G(v)$ and the number of edges
joining $v$ and a subset $U$ of $V-v$  is denoted by $d_G(v,U)$. 
The pair $u,v \in V$ is called \emph{connected} if there is a path joining $u$
and $v$. 
The pair $u,v \in V$ is called \emph{weakly $2k$-connected} if, for all $U
\subseteq V \setminus \{u,v\}$ and $F \subseteq E$ such that $2|U|+|F|<2k$, $u$
and $v$ are connected in the graph $G-U-F$. A set of
vertices is called weakly $2k$-connected if every pair of vertices contained in
this set is weakly $2k$-connected. So $G$ is weakly $2k$-connected if $|V|>k$
and $V$ is weakly $2k$-connected.\\

The constructions in this paper are based on the following facts.
\begin{prop}\label{elementary}
  Let $G=(V,E)$ be a graph admitting a $k$-connected orientation $D$.  Let $v$
  be a vertex of degree $2k$ and $u \neq v$ be a vertex such that $d_G(u,v)=2$.
  Then $\rho_D(v) = \delta_D(v) = k$ and the two parallel edges between $u$ and
  $v$ have opposite directions in $D$.
\end{prop}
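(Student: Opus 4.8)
The plan is to argue first that $\rho_D(v)=\delta_D(v)=k$, and then to use this
together with the $k$-connectivity of $D$ to force the two parallel $uv$-edges
to be oriented oppositely. For the first part, recall that $D$ is
$k$-connected, so for any vertex $w\neq v$ the pair $v,w$ must be strongly
connected even after deleting any $k-1$ vertices from $V\setminus\{v,w\}$. If,
say, $\rho_D(v)<k$, then $\rho_D(v)\le k-1$; deleting the $\rho_D(v)$ in-neighbours
of $v$ (a set of size $\le k-1$, and we may pad it with arbitrary further
vertices, or just note it already has size $<k$) leaves $v$ with indegree $0$,
so no dipath can enter $v$ from any remaining vertex $w$, contradicting
$k$-connectivity. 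Hence $\rho_D(v)\ge k$ and symmetrically $\delta_D(v)\ge k$;
since $\rho_D(v)+\delta_D(v)=d_G(v)=2k$, we get $\rho_D(v)=\delta_D(v)=k$.

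For the second part, suppose for contradiction that the two parallel edges
between $u$ and $v$ are oriented in the same direction in $D$, say both from $v$
to $u$ (the other case is symmetric). Let $N$ denote the set of
out-neighbours of $v$ in $D$ other than $u$; counting arc multiplicities,
$v$ sends exactly $\delta_D(v)=k$ arcs out, two of which go to $u$, so these
$k$ arcs land on at most $k-2$ vertices of $N\cup\{u\}$ other than $u$ —
more precisely $|N|\le k-2$. Now delete the vertex set $N$ from $D$, which has
size at most $k-2<k$. Wait: I must be careful that $u\notin N$ and
$v\notin N$, which holds by construction. In $D-N$ the only out-neighbour of
$v$ is $u$. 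Consider any vertex $w\neq v$ with $w\neq u$ that survives (such a
$w$ exists since $|V|>k$ and we deleted fewer than $k-1$ vertices besides
possibly needing room; if necessary choose $w$ among the survivors, which is
possible because $|V|-|N|\ge |V|-(k-2)>2$). Every dipath from $v$ to $w$ in
$D-N$ must leave $v$ through its unique out-neighbour $u$, hence passes through
$u$; therefore, deleting the single extra vertex $u$ (total deleted
$|N|+1\le k-1<k$) disconnects $v$ from $w$ in the underlying reachability
sense, contradicting that $D$ is $k$-connected. This forces the two $uv$-edges
to receive opposite orientations.

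The step I expect to require the most care is the bookkeeping on
$|N|\le k-2$ and ensuring all deleted sets stay strictly below size $k$ while
still leaving a valid target vertex $w$: one must use $|V|>k$ (from the
definition of $k$-connected digraph) to guarantee $D-N-u$ is nonempty and
actually contains a vertex distinct from $v$. A cleaner way to phrase the
contradiction is via the standard fact that in a $k$-connected digraph, for
every vertex $v$ and every $w\neq v$ there are $k$ internally-disjoint dipaths
from $v$ to $w$ (Menger's theorem for digraphs); two of the $k$ arcs leaving
$v$ are "used up" going to $u$, so at most one of these $k$ openly-disjoint
$v$–$w$ dipaths can start with an arc to $u$, while the remaining $\ge k-1$ must
start with arcs to distinct vertices of $N$, forcing $|N|\ge k-1$ and hence
$\delta_D(v)\ge (k-1)+2=k+1>k$, a contradiction. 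I would present the
self-contained cut argument above but may remark on the Menger formulation as
the conceptual reason.
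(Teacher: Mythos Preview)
Your proposal is correct and follows essentially the same strategy as the paper: first use $\rho_D(v)+\delta_D(v)=2k$ together with $\rho_D(v),\delta_D(v)\ge k$ to get equality, then observe that if both parallel edges point the same way, the set of out-neighbours (in the paper, in-neighbours) of $v$ has size at most $k-1$, and removing it destroys strong connectivity. The paper phrases the second step slightly more directly---it deletes the whole neighbour set $N\cup\{u\}$ in one go rather than first $N$ and then $u$---but the content is identical, and your bookkeeping about the existence of a surviving vertex $w$ (via $|V|>k$) is exactly the justification the paper leaves implicit.
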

\begin{proof}
  By $k$-connectivity of $D$, the indegree (respectively, the outdegree) of $v$ is at
  least $k$. Hence, since $2k = d_G(v) = \rho_D(v) + \delta_D(v)$ we have
  $\rho_D(v)=\delta_D(v)=k.$ Now suppose for a contradiction that the two
  parallel edges between $u$ and $v$ have the same direction, say from $u$ to
  $v$. Then the set of vertices that have an outgoing arc to $v$ is smaller than
  $k$ and deleting this set results in a digraph that is not strongly connected, a
  contradiction.
\end{proof}

For $U \subset V$, a pair of dipaths of $D$ (respectively, paths of $G$) is called
$U$-disjoint if each vertex of $U$ is contained in at most one dipath (respectively,
path).
Let $X$ and $Y$ be two disjoint vertex sets. A \emph{$k$-difan from $X$ to $Y$}
(respectively, a \emph{$k$-fan joining $X$ and $Y$}) is a set of $k$ pairwise $U$-disjoint
dipaths from $X$ to $Y$ (respectively, paths joining $X$ and $Y$) where $U$ is defined
by $U=V\setminus (X \cup Y)$ if $|X|=|Y|=1$, $U=V \setminus X$ if $|X|=1$ and
$|Y|>1$, $U=V \setminus Y$ if $|Y|=1$ and $|X|>1$, $U=V$ if $|X|>1$ and $|Y|>1$.

By Menger's theorem \cite{Menger1927}, a pair $u,v$ of vertices of $D$ is
$k$-connected if and only if there exist a $k$-difan from $u$ to $v$ and a
$k$-difan from $v$ to $u$. 
Let $X$ be a $k$-connected set of at least $k$ vertices and let $v$ be a vertex in
$V\setminus X$ such that there exist a $k$-difan from $X$ to $v$ and a $k$-difan
from $v$ to $X$; then, it is easy to prove that $X \cup v$ is $k$-connected.

Kaneko and Ota \cite{Kaneko_Ota2000} showed that a pair $u,v$ of vertices of $G$
is weakly $2k$-connected if and only if there exist $2$ edge-disjoint $k$-fans
joining $u$ and $v$. 
Let $X$ be a weakly $2k$-connected set of at least $k$ vertices and let $v$ be a
vertex in $V \setminus X$ such that there exist $2$ edge-disjoint $k$-fans
joining $v$ and $X$; then, it is easy to prove that $X \cup v$ is weakly $2k$-connected.
Let $X$ and $Y$ be two disjoint weakly $2k$-connected sets each of at least $k$
vertices such that there exist $2$ edge-disjoint $k$-fans joining $X$ and $Y$; then,
it is easy to prove that $X \cup Y$ is weakly $2k$-connected.

\section{Counterexamples} \label{ce}

We first disprove Conjecture \ref{FrankC} for $k=3$ and then extend the idea
of the proof to higher connectivity. We recall that $G_3$ is the graph defined
in Figure \ref{fg:kEqual}.

\begin{figure}[ht]
  \centering
  \includegraphics{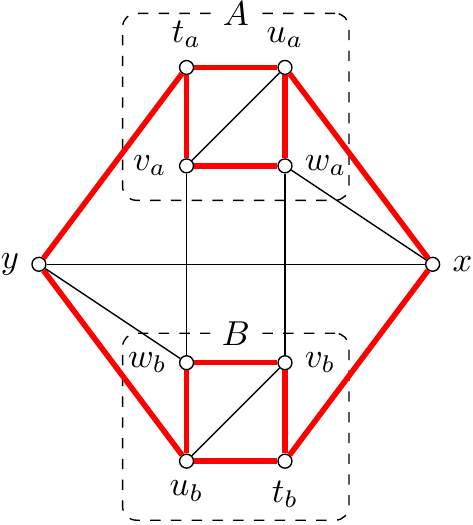}
  \caption{$G_3$ every thick and red edge represents a pair of
  parallel edges and black edges represent simple edges.}
  \label{fg:kEqual}
\end{figure}
\begin{prop}
  The graph $G_3$ is weakly $6$-connected and has no $3$-connected orientation.
\end{prop}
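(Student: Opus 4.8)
The plan splits into two parts. For the positive direction (weak $6$-connectivity), I would rely on the Kaneko–Ota characterization recalled in the Preliminaries: a pair $u,v$ is weakly $6$-connected iff there are $2$ edge-disjoint $3$-fans joining $u$ and $v$, and the "gluing" facts stated just after it let me build up weak $6$-connectivity of the whole vertex set from weak $6$-connectivity of small pieces. Concretely, I would identify a small "core" set $X$ of vertices in $G_3$ that is easily seen to be weakly $6$-connected (e.g. a bounded-size highly edge-connected subgraph), verify directly that the core is weakly $6$-connected by exhibiting the two edge-disjoint $3$-fans between any pair in it, and then attach the remaining vertices one at a time, each time exhibiting $2$ edge-disjoint $3$-fans joining the new vertex $v$ to the current set; by the lemma this preserves weak $6$-connectivity, and when all vertices have been absorbed we conclude $V$ is weakly $6$-connected. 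Since $G_3$ is drawn with most edges doubled, each such $3$-fan is essentially a collection of three edge-disjoint paths, and the doubled edges give the slack needed to route two edge-disjoint copies; this part is "just checking" but is organized by the attachment order.

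For the negative direction, I would argue by contradiction: suppose $D$ is a $3$-connected orientation of $G_3$. The engine is Proposition~\ref{elementary}: every vertex of degree $6$ must be balanced with $\rho_D = \delta_D = 3$, and wherever such a degree-$6$ vertex $v$ has a neighbour $u$ joined by a pair of parallel edges, those two parallel edges point in opposite directions. In $G_3$ the degree-$6$ vertices and the doubled edges are arranged precisely so that these forced constraints propagate: the direction of one doubled edge forces the orientation of another around the gadget, and following the chain of forcings around the structure yields a contradiction — typically, some vertex or some small cut ends up with in- or out-degree strictly less than $3$, contradicting $3$-connectivity (indeed contradicting even strong connectivity of $D$ minus a small vertex set, exactly as in the proof of Proposition~\ref{elementary}). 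So the local balance/opposite-orientation rules leave no globally consistent orientation.

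The main obstacle is the negative direction: making the "propagation of forced directions" rigorous and exhaustive. Proposition~\ref{elementary} pins down a lot, but not everything — some edges (the simple black edges, and doubled edges not incident to a degree-$6$ vertex with a parallel-edge neighbour) remain free, so I cannot claim the orientation is fully determined. The argument therefore has to be a short case analysis: follow the forced doubled edges as far as they go, then show that *no* completion of the remaining free edges can supply the needed indegree/outdegree $\ge 3$ at every vertex and across every separating set of size $<3$. I expect this to come down to a single "bottleneck" vertex (or a bottleneck pair) in $G_3$ whose in-neighbourhood among forced arcs already has size $<3$, so that deleting the (few) vertices sending arcs into it disconnects $D$ — mirroring the contradiction in Proposition~\ref{elementary}. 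Getting the figure-dependent bookkeeping exactly right, and presenting it without reference to the picture, is where the real work lies; everything else is an application of the recalled lemmas.
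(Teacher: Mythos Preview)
Your plan matches the paper's proof in both halves: build weak $6$-connectivity by the Kaneko--Ota fan criterion plus the gluing lemmas (the paper does $A$, then $B$, then $A\cup B$, then attaches $x$ and $y$), and kill any putative $3$-connected orientation via Proposition~\ref{elementary}.

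One sharpening for the negative direction, where you slightly overestimate the work. You write that the simple black edges ``remain free'' and anticipate a case analysis; in $G_3$ they do not. Each interior vertex of the simple-edge path $u_a\,v_a\,w_b\,y\,x\,w_a\,v_b\,u_b$ has degree exactly $6$ and is incident to two pairs of parallel edges; by Proposition~\ref{elementary} those pairs contribute $2$ in and $2$ out, so the two remaining simple edges at that vertex are forced to be one in and one out. Hence the entire simple path is oriented as a dipath, in one of the two possible directions. In either case the two ``cross'' edges $v_aw_b$ and $v_bw_a$ point the same way between $A$ and $B$, so $D-\{x,y\}$ is not strongly connected. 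That is the whole argument --- no residual freedom and no genuine case split beyond the symmetric choice of direction.
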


\begin{proof}
  First we show that $G_3$ is weakly $6$-connected.  Observe that there exist
  $2$ edge-disjoint $3$-fans joining any pair of vertices in $A \setminus w_a$.  Then, note
  that there exist $2$ edge-disjoint $3$-fans joining $w_a$ and $A \setminus w_a$. Hence $A$
  is weakly $6$-connected. Symmetrically $B$ is also weakly $6$-connected. There
  exist $2$ edge-disjoint $3$-fans joining $A$ and $B$ so $A \cup B$ is weakly
  $6$-connected.  There exists $2$ edge-disjoint $3$-fans joining $x$ (respectively,
  $y$) and $A\cup B$. It follows that $G_3$ is weakly $6$-connected.

  Suppose for a contradiction that $G_3$ has a $3$-connected orientation $D$.
  Note that every pair of parallel edges is incident to a vertex of degree $6$
  and the maximal edge multiplicity is $2.$ Hence, by Proposition
  \ref{elementary}, the two edges in every parallel pair have opposite
  directions in $D$.  Thus, in $D$ the orientation of the edges of the path $u_a
  v_a w_b y x w_a v_b u_b$ results in a directed path from $u_a$ to $u_b$ or
  from $u_b$ to $u_a$. In particular both $v_a w_b$ and $v_b w_a$ are directed
  from $A$ to $B$ or from $B$ to $A$.  In both cases $D-\{x,y\}$ is not strongly
  connected, a contradiction.
\end{proof}

We mention that $G_3$ is not a minimal counterexample. Indeed the graph $H_3$
obtained from $G_3$ by deleting the two vertices $t_a$ and $t_b$ and adding the
new edges $u_a v_a$, $v_a y$, $y u_a$, $u_b v_b$, $v_b x$ and $x u_b$ is weakly
$6$-connected but has no $3$-connected orientation. (Suppose that $H_3$ has a
$3$-connected orientation $D$. Then, by Proposition \ref{elementary}, in $D$ the
orientation of the edges of the two triangles $v_a y w_b$ and $v_b x w_a$
results in circuits. Considering the cut $\{x,y\}$, we see that those circuits
must be either both clockwise or both counterclockwise, say clockwise. Hence, by
Proposition \ref{elementary}, in $D$ the orientation of the path $u_a y x u_b$
results in a dipath from $u_a$ to $u_b$ or from $u_b$ to $u_a$. In the first
case $D-\{y,v_b\}$ is not strongly connected, in the other case $D-\{x,v_a\}$ is
not strongly connected.)

We now extend this construction to higher connectivity. Let $k \geq 4$ be
an integer. We define the graph $G_k=(V,E)$ as follows (see Figure
\ref{fg:kGeq4}).
\begin{figure}[ht]
  \centering
  \includegraphics{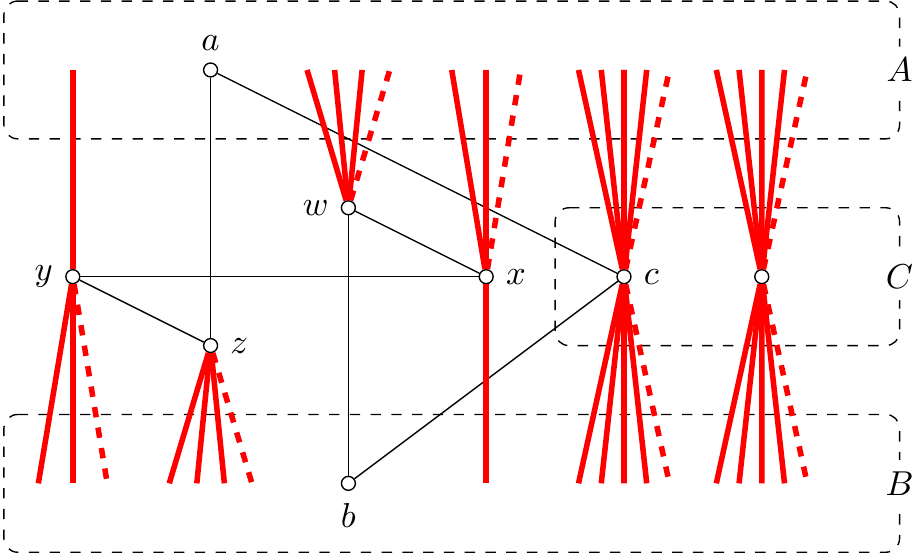}
  \caption{$G_k$ every thick and red edge represents a pair of
  parallel edges and black edges represent simple edges.}
  \label{fg:kGeq4}
\end{figure}
Let $n \geq k^2$ be an odd integer. The vertex set $V$ is the union of the
pairwise disjoint sets $A$, $B$, $C$ and $\{w,x,y,z\}$ where $|A|=|B|=n$ and
$|C|=k-3$. Now we add simple edges such that each of $A$ and $B$ induces a
complete simple graph. Choose arbitrarily one vertex from each of $A$, $B$ and
$C$, say $a \in A$, $b \in B$ and $c\in C$ and add the cycle $azyxwbc$. By the
choice of $n$, we can now add pairs of parallel edges between vertices in $A
\cup B \setminus \{a,b\}$ and $C \cup \{w,x,y,z\}$ such that
\begin{eqnarray*}
  \textrm{each vertex of} & A \cup B & \textrm{ is incident to at most one pair of parallel
  edges,}\\ 
  d_{G_k}(v,A)&=&d_{G_k}(v,B)=2\lceil \frac{k}{2} \rceil \textrm{ for all $v \in
  C-c$,}\\
d_{G_k}(c,A)&=&d_{G_k}(c,B)=2\lceil \frac{k}{2} \rceil+1,\\
d_{G_k}(w,A)&=&d_{G_k}(z,B)=2k-2,\\
d_{G_k}(y,A)&=&d_{G_k}(x,B)=2 \textrm{ and}\\
d_{G_k}(x,A)&=&d_{G_k}(y,B)=2k-4.
\end{eqnarray*}
\begin{prop}
  Let $k\geq 4$ be an integer. The graph $G_k$ is Eulerian, weakly
  $2k$-connected and has no $k$-connected orientation.
\end{prop}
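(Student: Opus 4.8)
The plan is to verify the three assertions separately: the Eulerian claim is a degree count, weak $2k$-connectivity is obtained by growing $V$ and repeatedly invoking the three closure statements collected at the end of Section~\ref{pre}, and non-orientability is a variant of the argument used above for $G_3$, with the cut $\{x,y\}$ replaced by $C\cup\{x,y\}$. For the Eulerian claim one checks that $G_k$ is connected (each vertex of $A$ or $B$ is joined to $a$ or $b$ inside its clique, and the cycle $azyxwbc$ ties the remaining vertices together) and that every degree is even: since $n$ is odd, a vertex of $A$ has degree $n-1$, or $n+1$ if it carries a parallel pair, or $n+1$ in the case of $a$; symmetrically for $B$; a vertex of $C-c$ has degree $4\lceil k/2\rceil$, the vertex $c$ has degree $2(2\lceil k/2\rceil+1)$, and each of $w,x,y,z$ has degree $2k$ (for $w$, its $2k-2$ edges to $A$ together with the cycle edges $xw$ and $wb$; symmetrically for $z$; and $x$ has $2k-4$ edges to $A$, two to $B$, plus $xw$ and $xy$). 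One also notes that $n\ge k^{2}$ leaves enough room in $A-a$ and in $B-b$, each vertex used at most once, to attach all the prescribed parallel pairs, so the construction is well defined.

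For weak $2k$-connectivity I would grow $V$ in steps. Since $A$ induces a complete graph on $n\ge k^{2}$ vertices, after deleting at most $k-1$ of its vertices and then fewer than $2k$ edges one is left with a clique on at least $k^{2}-k+1\ge 2k+1$ vertices minus fewer than $2k$ edges, which is still connected (a part of size $s$ would force at least $s(m-s)\ge m-1\ge 2k$ edges to be deleted); hence $A$, and symmetrically $B$, is weakly $2k$-connected. Next, the $k+1$ vertices adjacent to both parts---namely the $k-3$ vertices of $C$ together with $w,x,y,z$---contain the two $k$-element hub families $C\cup\{w,x,y\}$ and $C\cup\{x,y,z\}$; running a length-two path from $A$ to $B$ through each hub of each family (through $w$ via $wb$, through $z$ via $za$) and using, at each hub shared by the two families, disjoint sets of the available edges (possible since $x$ has two edges to $B$, $y$ has two edges to $A$, and each vertex of $C$ carries at least two parallel pairs to each of $A$ and $B$) yields two edge-disjoint $k$-fans joining $A$ and $B$, whence $A\cup B$ is weakly $2k$-connected. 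Finally I would absorb the remaining vertices one at a time: $c$, then the other vertices of $C$, then $w$, $z$, $x$, $y$. In each case the new vertex $v$ has at least $k$ neighbours in the current set $X$ and enough spare parallel edges to exhibit two edge-disjoint $k$-fans joining $v$ and $X$ (for $w$: its $k-1$ parallel pairs to $A$ and the edge $wb$ for one fan, their parallel partners and the path $w,x,a'$ for the other, with $a'$ chosen outside the first $k-1$ endpoints). After $y$ is absorbed, $V$ is weakly $2k$-connected, and $|V|>k$.

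For non-orientability, suppose $D$ were a $k$-connected orientation of $G_k$. Each of $w,x,y,z$ has degree $2k$ and is incident to a parallel pair, so Proposition~\ref{elementary} gives $\rho_D(v)=\delta_D(v)=k$ and forces every parallel pair incident to such a vertex $v$ to be oriented in opposite directions. At $v\in\{w,x,y,z\}$ the parallel pairs already account for $k-1$ entering and $k-1$ leaving arcs, so of the two cycle edges at $v$ exactly one enters and one leaves $v$. Propagating this constraint along the sub-path $a,z,y,x,w$ of the cycle chains together the orientations of $xw$, $yx$, $zy$, $za$ and $wb$, and yields either $w\to b$ and $a\to z$, or $b\to w$ and $z\to a$. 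Now delete $C\cup\{x,y\}$, a set of only $k-1$ vertices. In $G_k-(C\cup\{x,y\})$ the only edges between $P:=A\cup\{w\}$ and $Q:=B\cup\{z\}$ are $wb$ and $za$: the parallel pairs at $w$ lie inside $P$, those at $z$ lie inside $Q$, and every cycle edge other than $az$ and $wb$ has an endpoint in $C\cup\{x,y\}$. In the first of the two cases both $wb$ and $za$ point from $P$ to $Q$, and in the second both point from $Q$ to $P$; either way $D-(C\cup\{x,y\})$ is not strongly connected, contradicting the $k$-connectivity of $D$.

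The only step requiring real care is the bookkeeping in the weak $2k$-connectivity argument: one must track which parallel edges have been consumed so that the second $k$-fan at each stage really is edge-disjoint from the first, and one must handle the fact that $w$ (respectively $z$) meets the opposite clique only through the single edge $wb$ (respectively $za$), which is why the second fan there has to detour through $x$ (respectively $y$). The non-orientability part, in contrast, is short once one observes that deleting $C\cup\{x,y\}$ collapses the entire interface between $A$ and $B$ down to the two edges $wb$ and $za$, after which the parity chain along the cycle (exactly as for $G_3$) finishes the proof.
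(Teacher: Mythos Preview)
Your proof is correct and follows essentially the same route as the paper: a parity/degree check for the Eulerian claim, the same growth scheme for weak $2k$-connectivity (first $A$ and $B$ via the large cliques, then $A\cup B$ via the two hub families $C\cup\{w,x,y\}$ and $C\cup\{x,y,z\}$, then absorbing $C\cup\{w,x,y,z\}$ one vertex at a time), and the same non-orientability argument (Proposition~\ref{elementary} forces the cycle edges along $azyxwb$ into a dipath, after which deleting $C\cup\{x,y\}$ disconnects $D$). The only difference is that you spell out the bookkeeping that the paper leaves implicit.
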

\begin{proof}
Since $n$ is odd, both of the complete graphs induced by $A$ and $B$ are
Eulerian. Hence $G_k$, which is obtained from those graphs by adding a cycle and
parallel edges, is Eulerian.
Since $k \geq 4$, $n \geq k^2 \geq 2k+2$ thus both of the complete graphs
induced by $A$ and $B$ are weakly $2k$-connected. Note that there exist $2$
edge-disjoint $k$-fans joining $A$ and $B$ (one uses $C \cup \{w,x,y\}$
the other one uses $C \cup \{x,y,z\}$), thus
$A \cup B$ is weakly $2k$-connected. Note also that, for any
vertex $v \in C \cup \{w,x,y,z\}$, there exist $2$ edge-disjoint $k$-fans
joining $v$ and $A \cup B.$ Hence, $G_k$ is weakly $2k$-connected.

Suppose for a contradiction that $G_k$ has a $k$-connected orientation $D$.
Since $d_{G_k}(w)=d_{G_k}(x)=d_{G_k}(y)=d_{G_k}(z)=2k$ and by Proposition
\ref{elementary}, the orientation of the set of simple edges of the path
$azyxwb$ results in the dipath $azyxwb$ or the dipath $bwxyza$. In
both cases, $D-(C \cup \{x,y\})$ is not strongly connected, a contradiction.
\end{proof}

Note that with a slightly more elaborate construction we can obtain a
counterexample such that $|V| = O(k)$.

\section{NP-completeness}\label{npc}

In this section we prove the following result.
\begin{theo}\label{NPCtheo}
  Let $k \geq 3$ be an integer. Deciding whether a graph has a $k$-connected
  orientation is NP-complete. This holds also for Eulerian graphs.
\end{theo}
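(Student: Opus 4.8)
The plan is to reduce from \textsc{Not-All-Equal $3$-Sat} (NAE-3-SAT), which is NP-complete. Membership in NP is clear: a $k$-connected orientation is a polynomial-size certificate, and verifying $k$-connectivity amounts to checking, for each ordered pair $u,v$, that $u$ can still reach $v$ after deleting any $k-1$ vertices, which is a polynomial number of max-flow computations (via Menger, as noted after Proposition~\ref{elementary}). So the work is the hardness reduction. Given a NAE-3-SAT instance with variables and clauses, I would build a graph $G$ together with a designated ``backbone'' of high-multiplicity structure so that, by Proposition~\ref{elementary}, \emph{every} $k$-connected orientation of $G$ is forced to orient the backbone edges in a very rigid way; the only remaining freedom is a binary choice at each variable gadget, encoding a truth assignment, and at each clause gadget the three incident literal-edges must \emph{not all} point the same way, exactly mirroring the NAE constraint. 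This is the same mechanism used in the counterexamples above: a degree-$2k$ vertex with a parallel pair to some neighbour forces its in/out-degree to be $k$ and forces parallel pairs to be oppositely directed, which in turn propagates an orientation along paths of such vertices, and a small ``crossing'' gadget (like the role of $\{x,y\}$ separating $A$ from $B$ in $G_3$) converts ``all literal-edges agree'' into ``some two-vertex deletion disconnects the digraph.''

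Concretely I would: (1) design a \emph{variable gadget} $H_v$ — a small graph all of whose non-terminal vertices have degree $2k$ and carry parallel pairs — with two ``output'' edges (for the literal $x$ and its negation $\ox$) such that Proposition~\ref{elementary} forces the gadget into one of exactly two valid orientations, $\t$ or $\f$, which orient the two output edges oppositely; any other orientation of $H_v$ creates a bad $(k-1)$-vertex cut inside $H_v$. (2) Design a \emph{clause gadget} $H_\C$ with three terminal literal-edges, built again from degree-$2k$ vertices with parallel pairs, such that a $k$-connected orientation exists iff the three literal-edges entering $H_\C$ are not all oriented ``into'' $H_\C$ and not all ``out of'' $H_\C$ — i.e.\ they realise a not-all-equal pattern. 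The key design constraint, as in the proof that $G_3$ has no $3$-connected orientation, is to arrange a pair of vertices whose deletion leaves the gadget strongly connected precisely when the literal-edges disagree. (3) Glue the gadgets along shared high-connectivity ``hubs'' (complete graphs on $\ge 2k+2$ vertices, as with $A$ and $B$ above) so that the assembled graph $G$ is weakly $2k$-connected (arguing fan-by-fan, exactly as in the two propositions of Section~\ref{ce}, using the ``attach a vertex by two edge-disjoint $k$-fans'' and ``glue two sets by two edge-disjoint $k$-fans'' principles from Section~\ref{pre}), while every $k$-connected orientation of $G$ decomposes into a valid orientation of each gadget, hence into a satisfying NAE-assignment; conversely, from a NAE-assignment one reconstructs a $k$-connected orientation. (4) Finally, to get the Eulerian strengthening, I would pad each hub with enough extra parallel edges / take complete graphs on an odd number of vertices so that every vertex has even degree, noting as in the $G_k$ proof that adding parallel pairs and cycles to Eulerian pieces keeps the graph Eulerian and does not disturb the forcing argument. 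Taking any unsatisfiable-in-the-NAE-sense instance then yields the promised Eulerian counterexample to Conjecture~\ref{FrankC} for $k=3$.

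The routine part is the weak-$2k$-connectivity bookkeeping — it follows the template already used twice in Section~\ref{ce}, just with more pieces. The main obstacle, and where the real content lies, is the clause gadget: I must engineer a bounded-size graph, reusing only the ``degree $2k$ + parallel pair'' forcing lemma, whose $k$-connected orientations correspond \emph{exactly} to not-all-equal triples of boundary orientations, with no spurious extra constraints that would make the reduction incorrect in either direction. I expect this to require carefully tracking, for each of the $2^3$ boundary patterns, a specific ``small cut'' witness (a set of $<k$ vertices whose removal destroys strong connectivity) in the bad cases, and an explicit pair of $k$-difans in the good cases — much as the $G_3$ argument exhibited the cut $\{x,y\}$ in the forbidden orientations. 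A secondary subtlety is ensuring the variable and clause gadgets, once glued, do not interact to create new forced orientations or new small cuts beyond the intended ones; handling this cleanly is why the gluing is done through large complete-graph hubs, which are robust to both.
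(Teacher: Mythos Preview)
Your high-level plan --- reduce from \textsc{Not-All-Equal $3$-Sat}, exploit Proposition~\ref{elementary} on degree-$2k$ vertices with parallel pairs to rigidify most of the orientation, build variable and clause gadgets, glue through a large complete (di)graph hub, and finish with a parity fix for the Eulerian case --- is exactly the paper's approach.

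Where your sketch diverges is in which gadget carries the difficulty, and this matters. You flag the clause gadget as the ``main obstacle'' and anticipate a delicate $2^3$ analysis; in the paper it is the trivial part. One takes a single vertex $w^\C$, joins it by a complete digraph to a \emph{shared} set $L$ of $k-1$ vertices, and attaches three simple ``special'' arcs toward three literal-vertices; then $L$ separates $w^\C$ from the rest unless at least one special arc enters and at least one leaves, which is exactly NAE. (Note the cut has size $k-1$, not $2$; your ``pair of vertices whose deletion\ldots'' is the $k=3$ picture only.) Conversely, the variable side you describe as ``two output edges'' is where the real work is, and as stated it is a gap: a variable can occur in many clauses, so two edges are not enough, and you have no mechanism forcing all occurrences to agree. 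The paper solves this by threading a long circuit $\Delta_\x$ through one $(\C,\x)$-gadget per occurrence; each such gadget uses a second shared set $M$ of $k-2$ vertices and several degree-$2k$ vertices with parallel pairs so that the $(k-1)$-cut $M\cup t^\C_\x$ forces the special arc $e^\C_\x$ at $w^\C$ to match the direction of $\Delta_\x$. That coupling --- not the clause check --- is the heart of the reduction, and your proposal does not yet contain it.
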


A \emph{reorientation} of a digraph $D$ is a digraph obtained from $D$ by
reversing a subset of arcs. Obviously, the problem of finding a $k$-connected
orientation of a graph and the problem of finding a $k$-connected reorientation
of a digraph are equivalent. For convenience we prove the NP-completeness of the
second problem by giving a reduction from the problem of \textsc{Not-All-Equal
$3$-Sat} which is known to be NP-complete \cite{Schaefer1978}.\\

Let $\Pi$ be an instance of \textsc{Not-All-Equal $3$-Sat} and let $k \geq 3$ be
an integer. We define a directed graph $D_k=D_k(\Pi)=(V,A)$ such that there exists
a $k$-connected reorientation of $D_k$ if an only if there is an assignment of the
variables which satisfies $\Pi$.

The construction of $D_k$ associates to each variable $\x$ a circuit $\Delta_\x$ and
to each pair $(\C,\x)$ where $\x$ is a variable that appears in the clause $\C$
a special arc $e^\C_\x$ (see Figure \ref{fg:GeneralGadget}). A reorientation of
$D_k$ is called \emph{consistent} if the orientation of parallel edges is
preserved and, for each variable $\x$, the orientations of the special arcs of
type $e^\C_\x$ and the circuit $\Delta_\x$ are either all preserved or all
reversed.  A consistent reorientation of $D_k$ defines a \emph{natural
assignment} of the variables in which a variable $\x$ receives value $\t$ if
$\Delta_\x$ is preserved and $\f$ if $\Delta_\x$ is reversed. We define
reciprocally a \emph{natural consistent reorientation} from an assignment of the
variables.

\begin{figure}[ht]
  \centering
  \includegraphics{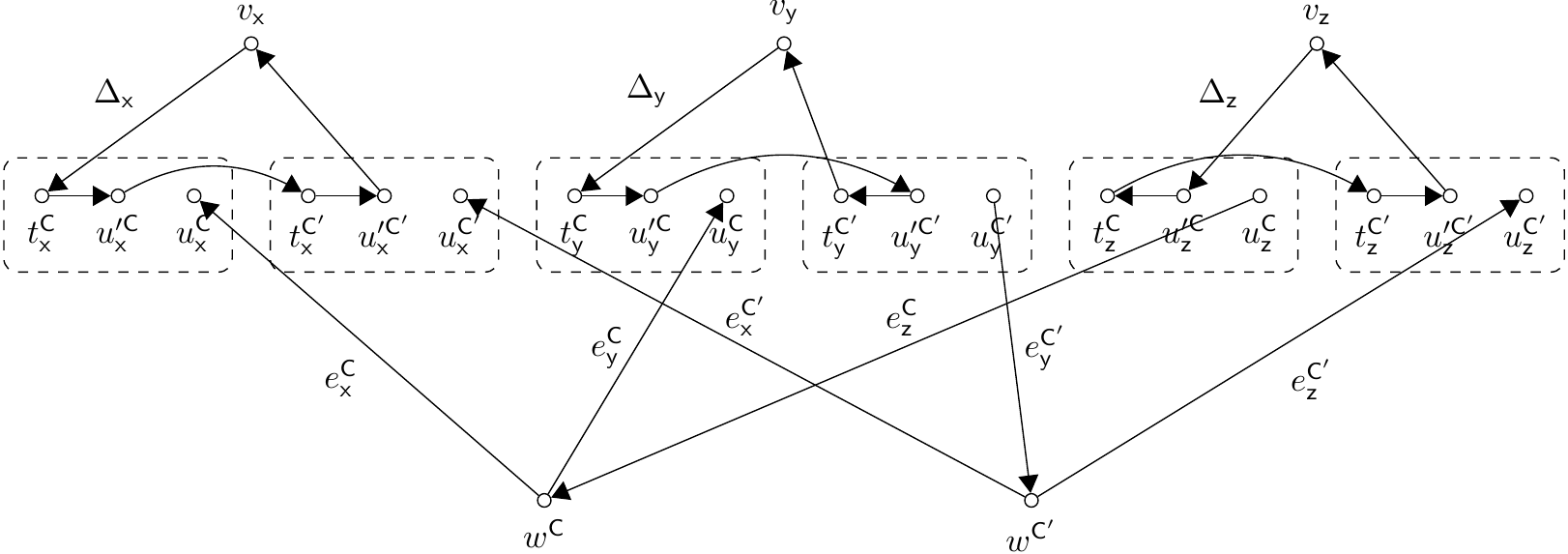}
  \caption{Representation of the circuits and the special arcs of $D_3(\Pi)$
  where $\Pi$ is composed of the clauses $\C=(\x,\y,\oz)$ and $\C'=(\x,\oy,\z)$. The dashed boxes represent the
  clause-variable gadgets.}
  \label{fg:GeneralGadget}
\end{figure}

For each clause $\C$ we construct a $\C$-gadget (see Figure \ref{fg:ClauseGadget})
that uses the special arcs associated to $\C$.  The purpose of the $\C$-gadgets is
to obtain the following property.
\begin{prop}\label{clauseProp}
  An assignment of the variables satisfies $\Pi$ if and only if it defines a
  natural consistent $k$-connected reorientation.
\end{prop}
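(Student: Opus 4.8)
The plan is to establish Proposition~\ref{clauseProp} by analyzing the structure forced on any $k$-connected reorientation of $D_k$, reducing it first to a \emph{consistent} reorientation and then matching the consistency constraints to the Not-All-Equal clause condition. The two directions of the equivalence require rather different arguments. For the ``if'' direction, I would take a satisfying NAE-assignment, form the natural consistent reorientation, and verify that it is $k$-connected: this amounts to producing, for every pair of vertices, the two $k$-difans demanded by Menger's theorem, building them up via the ``gluing'' facts stated at the end of Section~\ref{pre} (if $X$ is $k$-connected and there are $k$-difans both ways between $X$ and a vertex $v$, then $X\cup v$ is $k$-connected; similarly for joining two $k$-connected sets). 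The key point is that each $\Delta_\x$, being a circuit with parallel-edge attachments, is $k$-connected on its own, the clause gadgets link adjacent circuits, and the ``not-all-equal'' status of each satisfied clause is exactly what keeps the relevant cuts inside each $\C$-gadget from separating the digraph — so that the global digraph is $k$-connected regardless of which of the two ``polarities'' each clause exhibits.

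For the ``only if'' direction, the plan is: (1) show every $k$-connected reorientation of $D_k$ \emph{must} be consistent; and (2) show that the natural assignment of a consistent $k$-connected reorientation must satisfy every clause in the NAE sense. Step~(1) is where Proposition~\ref{elementary} does the heavy lifting: every pair of parallel edges in $D_k$ is attached to a degree-$2k$ vertex (this will be a design feature of the gadgets, mirroring the counterexamples of Section~\ref{ce}), so parallel-edge orientations are preserved; and then, along each circuit $\Delta_\x$ together with its special arcs $e^\C_\x$, the degree-$2k$ constraint plus the opposite-direction rule for parallel pairs forces the whole chain to be oriented coherently — i.e.\ all preserved or all reversed — exactly as in the argument that the path $azyxwb$ must be oriented as a dipath in $G_k$. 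Step~(2) is the content of the $\C$-gadget: I would show that if all three literals of a clause $\C$ take the same truth value under the natural assignment, then the corresponding special arcs $e^\C_\x$ all point ``the same way'' relative to the gadget, which creates a small vertex set (of size $<k$) whose removal disconnects the gadget from the rest — contradicting $k$-connectivity — whereas if the clause is not-all-equal, no such bad cut exists.

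The main obstacle I anticipate is the explicit construction and verification of the $\C$-gadget in Step~(2): one must design a fixed graph on $O(k)$ vertices, attach it to the three special arcs of $\C$, ensure it is Eulerian and contributes no new obstructions (so that the ``if'' direction still goes through for every NAE polarity), and yet arrange that each of the two ``all-equal'' configurations of the three special arcs produces a separating set of fewer than $k$ vertices. Getting the degree bookkeeping right so that Proposition~\ref{elementary} applies where needed (degree exactly $2k$, multiplicity at most $2$) while keeping every vertex of even degree for the Eulerian claim is delicate, and the case analysis over the $2^3$ sign patterns of the special arcs — of which exactly the two constant ones must fail and the six mixed ones must succeed — is the combinatorial heart of the reduction. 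I would organize this by first isolating the behavior of the gadget in a standalone lemma describing, for each of the eight orientations of its three boundary arcs, whether the gadget admits a $k$-connected reorientation extending it, and then derive Proposition~\ref{clauseProp} by combining that lemma with the consistency result from Step~(1) and the gluing facts from Section~\ref{pre}.
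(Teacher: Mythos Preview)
Your proposal conflates Proposition~\ref{clauseProp} with Proposition~\ref{variableProp} and with the construction of $D_k$ itself. The statement you are asked to prove concerns only the \emph{natural consistent reorientation} $D'$ determined by a given assignment $\Omega$; consistency is built into the hypothesis, so your Step~(1) (``show every $k$-connected reorientation must be consistent'') is irrelevant here --- that is exactly the content of the separate Proposition~\ref{variableProp}. Similarly, you speak of needing to ``design a fixed graph on $O(k)$ vertices'' for the $\C$-gadget and to run an $8$-case analysis, but $D_k$ is already fully specified in the paper: the $\C$-gadget is nothing more than a single vertex $w^\C$ joined by a complete digraph to the fixed set $L$ of size $k-1$, plus the three special arcs $e^\C_\x,e^\C_\y,e^\C_\z$.

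With that structure in hand, the paper's argument is much shorter than what you outline. For the direction ``$D'$ $k$-connected $\Rightarrow$ $\Omega$ satisfies $\Pi$'': in $D'-L$ the only arcs incident to $w^\C$ are the three special arcs, and since $|L|=k-1$, $k$-connectivity forces at least one special arc to enter $w^\C$ and at least one to leave it; unwinding the definition of $e^\C_\x$ shows this is exactly the NAE condition for $\C$. For the converse, the paper does \emph{not} build global $k$-difans from scratch as you propose. It factors the work through the intermediate Proposition~\ref{almostConnectivity}: in \emph{every} consistent reorientation the set $V\setminus W$ is already $k$-connected. Given that, one only has to attach each $w^\C$, and when $\C$ is NAE-satisfied $w^\C$ has at least $k$ out-neighbours ($k-1$ in $L$ plus at least one via a special arc) and at least $k$ in-neighbours, which suffices. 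Your plan would eventually work, but it reinvents Proposition~\ref{almostConnectivity} implicitly and carries a great deal of unnecessary case analysis; the missing idea is precisely this decomposition into ``$V\setminus W$ is always $k$-connected'' plus ``$w^\C$ has enough neighbours iff $\C$ is satisfied''.
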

For each pair $(\C,\x)$ where $\C$ is a clause and $\x$ is a
variable that appears in $\C$ we define a $(\C,\x)$-gadget (see Figure
\ref{fg:VariableGadget}) which links the orientation of $\Delta_\x$ to the
orientation of $e^\C_\x$. We will prove the following fact.
\begin{prop}\label{variableProp}
  If there exists a $k$-connected reorientation of $D_k$ then there exists a
  consistent $k$-connected reorientation of $D_k$.
\end{prop}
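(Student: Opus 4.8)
The plan is to take an arbitrary $k$-connected reorientation $D'$ of $D_k$ and transform it, in a few controlled steps, into a consistent $k$-connected reorientation. The first step handles the parallel-edge part of consistency. By construction every pair of parallel edges of $D_k$ is incident to a vertex of degree $2k$ and the maximal edge multiplicity is $2$, so Proposition \ref{elementary} applies to each such pair; hence in $D'$ the two arcs of every parallel pair have opposite directions. Since $D_k$ orients each parallel pair with opposite directions as well (as it must, or no consistent reorientation could exist), every parallel edge is automatically preserved in $D'$. Thus consistency now amounts to showing that, for each variable $\x$, the circuit $\Delta_\x$ together with all of its special arcs $e^\C_\x$ can be made either all preserved or all reversed.

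The second step shows that each circuit $\Delta_\x$ is already entirely preserved or entirely reversed in $D'$. By construction every vertex of $\Delta_\x$ has degree $2k$ and, besides its two arcs on $\Delta_\x$, is incident only to parallel pairs. Proposition \ref{elementary} forces its indegree and outdegree to equal $k$ and each incident parallel pair to split one arc in and one arc out, so exactly one of its two circuit arcs enters it and one leaves it. Consequently $\Delta_\x$ is a directed cycle in $D'$, which means it is preserved or reversed as a whole. After this step the only remaining obstruction to consistency is a special arc $e^\C_\x$ whose direction disagrees with the sense of $\Delta_\x$.

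The third and main step is the analysis of the $(\C,\x)$-gadget, whose purpose is exactly to eliminate such disagreement. For each fixed pair $(\C,\x)$ I would either (i) exhibit inside the gadget a vertex set $S$ with $|S| < k$ and two gadget vertices $p,q$ such that, given the already forced orientations of the parallel pairs and of $\Delta_\x$, a dipath from $p$ to $q$ in $D'-S$ exists only if $e^\C_\x$ is oriented to agree with the sense of $\Delta_\x$, so that $k$-connectivity of $D'$ already forces the agreement; or (ii) if the gadget instead admits both orientations of $e^\C_\x$, identify the partner arc (or short path) inside the gadget that must be reversed simultaneously with $e^\C_\x$, check via Menger's theorem that this simultaneous flip destroys no $k$-difan through the gadget, and perform it. Applying this to every pair $(\C,\x)$ yields a reorientation in which, for each variable, the circuit and all of its special arcs agree, i.e.\ a consistent $k$-connected reorientation of $D_k$.

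I expect the third step to be the real work. The difficulty is to pin down, for the $(\C,\x)$-gadget, the correct separating sets and the $k$-difans that must be preserved, and to verify --- using Menger's theorem together with Proposition \ref{elementary} --- that strong connectivity after deleting up to $k-1$ vertices fails unless $e^\C_\x$ follows $\Delta_\x$ (respectively, that the compensating flip is harmless). The first two steps are routine bookkeeping with Proposition \ref{elementary}; the third is a finite but delicate case analysis that hinges on the precise wiring of the gadget.
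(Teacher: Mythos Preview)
Your overall plan matches the paper's, but two of your steps contain factual slips about $D_k$, and the ``hard'' step is in fact the easiest.

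\medskip

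\textbf{Step 1.} Your claim that \emph{every} pair of parallel edges in $D_k$ is incident to a vertex of degree $2k$ is false: the complete digraphs induced on $N$, on $L\cup w^\C$, and on $M\cup(U^\C_\x\setminus u^\C_\x)$ create many parallel pairs between vertices of high degree, so Proposition~\ref{elementary} does not apply there. The paper handles this step differently and without Proposition~\ref{elementary}: if in $D'$ some parallel pair has both arcs pointing the same way, reversing one of them clearly cannot destroy $k$-connectivity, so after finitely many such flips all parallel pairs are in opposite directions, i.e.\ preserved.

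\medskip

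\textbf{Step 2.} You write that \emph{every} vertex of $\Delta_\x$ has degree $2k$; in fact the distinguished vertex $v_\x\in N$ does not. The paper's argument is that every vertex of $\Delta_\x$ \emph{except} $v_\x$ has degree $2k$ and is incident to $k-1$ parallel pairs, so Proposition~\ref{elementary} forces the two cycle arcs at each such vertex to be one in and one out; this already determines the two arcs at $v_\x$ and makes $\Delta_\x$ a directed cycle.

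\medskip

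\textbf{Step 3.} This is not the delicate case analysis you anticipate; it is one line once you name the right set. Take $S=M\cup\{t^\C_\x\}$, so $|S|=k-1$. In $D'-S$ the only arcs between $U^\C_\x$ and its complement are the special arc $e^\C_\x$ and a single arc of $\Delta_\x$ (the one joining $u'^\C_\x$ to the next gadget). By $k$-connectivity of $D'$ one of these must enter and the other leave $U^\C_\x$; comparing with the construction of $D_k$ shows $e^\C_\x$ is reversed if and only if $\Delta_\x$ is reversed. Your alternative (i) is exactly right, and no flipping as in (ii) is needed.
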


\begin{figure}[ht]
  \begin{minipage}[b]{0.45\linewidth}
    \centering
    \includegraphics{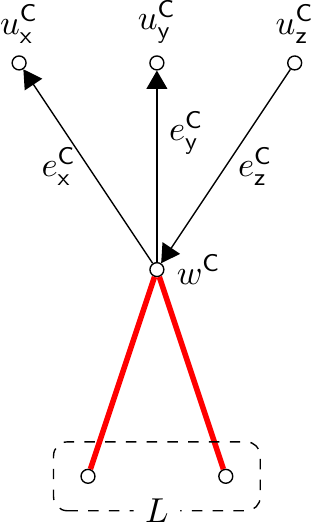}
    \caption{A clause gadget for $k=3$ and $\C=(\x,\y,\oz)$. Each red and thick
    edge represents a pair of parallel arcs in opposite directions.}
    \label{fg:ClauseGadget}
  \end{minipage}
  \hspace{0.08\linewidth}
  \begin{minipage}[b]{0.45\linewidth}
    \centering
    \includegraphics{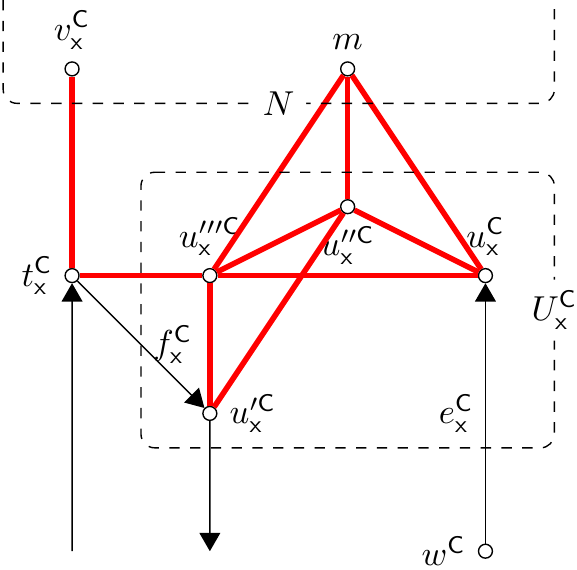}
    \caption{A $(\C,\x)$-gadget for $k=3$ and $\x \in \C$. Each red and thick
    edge represents a pair of parallel arcs in opposite directions.}
    \label{fg:VariableGadget}
  \end{minipage}
\end{figure}

Let $L$ be a set of $k-1$ vertices. We construct a clause gadget as follows.
For a clause $\C$ composed of the variables $\x,\y$ and $\z$ we add
the vertices $w^\C,u^\C_\x,u^\C_y,u^\C_\z$. We add arcs such that $L \cup w^\C$
induces a complete digraph. We add the special arc $w^\C u^\C_\x$ if $\x \in \C$
and the special arc $u^\C_\x w^\C$ if $\ox \in \C$. This special arc is denoted
by $e^\C_\x$. We define similarly the special arcs $e^\C_\y$ and $e^\C_\z$. This
ends the construction of the $\C$-gadget. Let $W$ denote the set of all vertices
of type $w^\C$.

Let $M$ be a set of $k-2$ new vertices and choose arbitrarily one vertex $m \in
M$. For each pair $(\C,\x)$ where $\C$ is a clause and $\x$ is a variable that
appears in $\C$ we add the new vertices $t^\C_\x, u'^\C_\x, u''^\C_\x,
u'''^\C_\x, v^\C_\x$ and denote $U^\C_\x = \{u^\C_\x, u'^\C_\x, u''^\C_\x,
u'''^\C_\x\}$. We add arcs such that $M \cup (U^\C_\x \setminus u^\C_\x)$ induces a
complete digraph. We add pairs of parallel arcs in opposite directions between
the pairs of vertices $(v^\C_\x,t^\C_\x)$, $(t^\C_\x,u'''^\C_\x)$, 
$(u'''^\C_\x,u'^\C_\x)$, $(u'^\C_\x,u''^\C_\x)$ and all the pairs of type
$(t^\C_\x,m')$ and $(u'^\C_\x,m')$ for each $m' \in M \setminus m$. Note that, so
far, the undirected degree of $t^\C_\x$ and $u'^\C_\x$ is $2k-2$. We add an arc
$t^\C_\x u'^\C_\x$ if $\x \in \C$ and an arc $u'^\C_\x t^\C_\x$ if $\ox \in \C$.
Call this arc $f^\C_\x$. The definition of the $(\C,\x)$-gadget is concluded by
the following definition of the circuit $\Delta_\x$.

For each variable $\x$ define a new vertex $v_\x$ and add arcs such that $v_\x$
and the set of vertices of type $t^\C_\x$ and $u'^\C_\x$ induce a circuit
$\Delta_\x$ that traverses (in arbitrary order) all the $(\C,\x)$-gadgets such
that $\C$ is a clause containing $\x$. In this circuit connect a
$(\C,\x)$-gadget to the next $(\C',\x)$-gadget by adding an arc leaving the head
of $f^\C_\x$ and entering the tail of $f^{\C'}_\x$ (see Figure
\ref{fg:GeneralGadget}). Note that now the undirected degree of $t^\C_\x$ and
$u'^\C_\x$ is $2k$.

We denote by $N$ the union of $L$, $M$ and all the vertices of type $v_\x$ or
$v^\C_\x$. To conclude the definition of $D_k$ we add edges such that $N$
induces a complete digraph.\\

The proof of Proposition \ref{variableProp} follows from the construction of the
$(\C,\x)$-gadgets.
\begin{proof}[Proof of Proposition \ref{variableProp}]
  Let $D'$ be a $k$-connected reorientation of $D_k$ and let $\x$ be a variable.
  Observe that all the vertices incident to $\Delta_\x$ except $v_\x$ are of degree
  $2k$ and incident to $k-1$ pairs of parallel edges.  Hence, by Proposition
  \ref{elementary}, $\Delta_\x$ is either preserved or reversed.  Let $\C$ be a
  clause in which $\x$ appears. In $D'-(M \cup t^\C_\x)$ exactly one
  arc enters $U^\C_\x$ and exactly one arc leaves $U^\C_\x$ (see Figure
  \ref{fg:VariableGadget}). One of these arcs belongs to $\Delta_\x$ and the
  other is the special arc $e^\C_\x$. Hence, by $k$-connectivity of $D'$,
  $e^\C_\x$ is reversed if and only if $\Delta_\x$ is reversed.

  If there exists a pair of parallel arcs in the same direction in $D'$ then
  reversing the orientation of one arc of this pair preserves the
  $k$-connectivity. Hence we may assume that in $D'$ the orientation of parallel
  edges is preserved.
\end{proof}

The following fact follows easily from the definition of $D_k$. We recall that
$W$ is the set of vertices of type $w^\C$.
\begin{prop}\label{almostConnectivity}
  In every consistent reorientation of $D_k$ the set $V \setminus W$ is $k$-connected.
\end{prop}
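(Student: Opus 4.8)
\noindent\emph{Proof idea.} I would grow $V\setminus W$ out of the complete digraph on $N$, one vertex at a time, using the accretion fact recalled in Section~\ref{pre}: if $X$ is a $k$-connected set with $|X|\ge k$ and $v\notin X$ admits both a $k$-difan from $X$ to $v$ and a $k$-difan from $v$ to $X$, then $X\cup v$ is $k$-connected. Fix a consistent reorientation $D'$ of $D_k$. Between any two vertices of $N$ there is an arc in each direction (the two edges of the underlying graph), and a consistent reorientation preserves the orientation of parallel edges, so $N$ still induces a complete digraph in $D'$; since $|N|=|L|+|M|+(\#\text{variables})+(\#\text{clause--variable pairs})\ge(k-1)+(k-2)+1>k$, and deleting fewer than $k$ vertices from a complete digraph on more than $k$ vertices leaves a strongly connected digraph, $N$ is $k$-connected. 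The vertices of $V\setminus W$ lying outside $N$ are exactly $t^\C_\x,u^\C_\x,u'^\C_\x,u''^\C_\x,u'''^\C_\x$ over all clause--variable pairs $(\C,\x)$, so it remains to adjoin all of them.

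I would process the clause--variable pairs one by one --- treating all gadgets of a fixed variable $\x$ consecutively along the circuit $\Delta_\x$ --- and within each pair adjoin $u'''^\C_\x,u''^\C_\x,u'^\C_\x,t^\C_\x$ and finally $u^\C_\x$, checking at each step that the new vertex $v$ admits $k$-difans to and from the current set $X\supseteq N$. Three observations make this routine. First, each such $v$ has at least $k$ in-neighbours and at least $k$ out-neighbours in $D'$: for $t^\C_\x$ and $u'^\C_\x$ this holds because they have degree $2k$, each of their $k-1$ parallel pairs contributes one arc each way, and the two remaining incident arcs --- $f^\C_\x$ and a connecting arc of $\Delta_\x$ --- form a subpath of the circuit $\Delta_\x$, so one enters and one leaves; for $u^\C_\x,u''^\C_\x,u'''^\C_\x$ it follows directly from the construction. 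Second, most of these neighbours either lie in $M\subseteq N$ (or equal $v^\C_\x\in N$) or have already been placed in $X$, yielding length-one dipaths between $v$ and $X$. Third, the remaining few dipaths can be completed by routing through the complete digraph on $M\cup(U^\C_\x\setminus u^\C_\x)$ --- in particular through $m$, which is never a neighbour of $t^\C_\x$ or $u'^\C_\x$ and is therefore always available --- or, for $u^\C_\x$, through $w^\C$ into the complete digraph on $L\cup w^\C$, which is legitimate since a $k$-difan may pass through vertices of $W$; and because $N$ is a large complete digraph, the distinct endpoints a difan requires are always there. Once every vertex outside $N$ has been adjoined, $V\setminus W$ is $k$-connected.

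The only slightly delicate part is the bookkeeping behind the third observation: for the vertex $v$ being adjoined one must verify that no set of fewer than $k$ vertices avoiding $v$ separates $v$ from $N$ in $D'$, which by Menger's theorem is equivalent to the existence of the two $k$-difans. I would carry this out vertex type by vertex type, using the parallel pairs at $v$ together with the two disjoint ``escape routes'' of each gadget --- the clique on $M\cup(U^\C_\x\setminus u^\C_\x)$, and, for $u^\C_\x$, the arc $e^\C_\x$ through $w^\C$ --- to rule out every small cut. I expect this finite case analysis, and not any conceptual difficulty, to be the main part of the argument.
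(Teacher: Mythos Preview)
Your approach is essentially the paper's: accrete $V\setminus W$ from the complete digraph on $N$ using $k$-difans built from the parallel pairs, the clique containing $M$, the circuit $\Delta_\x$, and (for the $U^\C_\x$ vertices) the special arc $e^\C_\x$ routed through $w^\C$ into $L$. The paper's only simplification is the order: it adjoins $t^\C_\x$ first---using the dipath along $\Delta_\x$ to and from $v_\x$, disjoint from $M\cup v^\C_\x$---and then handles every $u\in U^\C_\x$ uniformly by exhibiting a $k$-difan from $u$ to $M\cup t^\C_\x\cup v_\x$ (the last path along $\Delta_\x$) and a $k$-difan from $M\cup t^\C_\x\cup L$ to $u$ (the path from $L$ through $w^\C$ via $e^\C_\x$), which replaces your five-vertex case analysis and the need to process gadgets in $\Delta_\x$-order with two uniform difans.
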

\begin{proof}
  Let $D'$ be a consistent reorientation of $D_k$.
  Clearly $N$ is $k$-connected. Let $\C$ be a clause and $\x$ be a variable that
  appears in $\C$. The circuit $C_\x$ contains a dipath from (respectively, to)
  $t^\C_\x$ to (respectively, from) $v_\x$ that is disjoint from $M \cup v^\C_\x$. Hence
  $N \cup t^\C_\x$ is $k$-connected. 
  
  We may assume without loss of generality that, in $D'-(M \cup t^\C_\x)$, the
  special arc $e^\C_\x$ enters $U^\C_\x$ and an arc of $\Delta_\x$ leaves
  $U^\C_\x$. Let $u$ be a vertex of $U^\C_\x$. Observe that there is a $k$-difan
  from $u$ to $M \cup t^\C_\x \cup v_\x$ (the dipath to $v_\x$ uses arcs of
  $\Delta_\x$). Observe that there is a $k$-difan from $M \cup t^\C_\x \cup L$
  to $u$ (the dipath from $L$ uses the arc $e^\C_\x$). Hence, since $M$ and $L$ are
  subsets of $N$, $N \cup t^\C_\x \cup U^\C_\x$ is $k$-connected and the
  proposition follows.
\end{proof}

We can now prove Proposition \ref{clauseProp}.
\begin{proof}[Proof of Proposition \ref{clauseProp}]
  Let $\Omega$ be an assignment of the variables and $D'$ the natural consistent
  reorientation of $D_k$ defined by $\Omega$.  Let $e^\C_\x$ be a special arc
  associated to a clause $\C$ and a variable $\x$. In $D'$, the arc $e^\C_\x$
  leaves $w_\C$ if and only if $\x=\t$ and $\x \in \C$ or $\x=\f$ and $\ox \in
  \C$. And, in $D'$, the arc $e^\C_\x$ enters $w_\C$ if and only if $\x=\t$ and
  $\ox \in \C$ or $\x=\f$ and $\x \in \C$.  Hence $\C$ contains a $\t$ (respectively,
  $\f$) value if and only if there exists a special arc leaving (respectively, entering)
  $w^\C$ in $D'$. Thus a clause $\C$ is satisfied by $\Omega$ if and only if
  \begin{align}
    w^\C \textrm{ is left by at least one special arc} \notag\\
    \textrm{and entered by at least one special arc.} \tag{$\star$}
  \end{align}
  Observe that, for each clause $\C$, the only arcs incident to $w_\C$ in $D'-L$
  are special.  Since $|L|=k-1$, if $D'$ is $k$-connected then ($\star$) holds
  for all clauses thus $\Omega$ satisfies $\Pi$.  Conversely, if $\Omega$
  satisfies $\Pi$ then for every clause $\C$ ($\star$) holds and $w^\C$ has at
  least $k$ out-neighbors and at least $k$ in-neighbors. Thus by Proposition
  \ref{almostConnectivity} $D'$ is $k$-connected
\end{proof}

Denote by $G'_k=G'_k(\Pi)$ the underlying undirected graph of $D_k(\Pi)$.
We can now prove the main theorem of this section.
\begin{proof}[Proof of Theorem \ref{NPCtheo}]
  By Propositions \ref{variableProp} and \ref{clauseProp}, $G'_k(\Pi)$ has a
  $k$-connected orientation if and only if there exists an assignment satisfying
  $\Pi$. Since the order of $G'(\Pi)$ is a linear function of the size of $\Pi$
  and \textsc{Not-All-Equal $3$-Sat} is NP-complete \cite{Schaefer1978} this
  proves the first part of Theorem \ref{NPCtheo}.

  Observe that in $G'_k$ the only vertices of odd degree are of type $u^\C_\x$
  and $w^\C$. Let $l$ be an arbitrary vertex of $L$. We can add a set $F$ of
  edges of type $u^\C_\x m,ml,lw^\C$ such that $G'_k+F$ is Eulerian. Observe
  that for any orientation of $F$, Propositions \ref{variableProp} and
  \ref{clauseProp} still hold for $D_k+F$. This proves the second part of
  Theorem \ref{NPCtheo}.
\end{proof}

The following fact shows that $G'_k(\Pi)$ is a counterexample to Conjecture
\ref{FrankC} if $\Pi$ is not satisfiable.
\begin{prop}
  The graph $G'_k(\Pi)$ is weakly $2k$-connected.
\end{prop}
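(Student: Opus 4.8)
The plan is to verify weak $2k$-connectivity of $G'_k(\Pi)$ by building it up set by set, exactly paralleling the structure used in the counterexample constructions of Section~\ref{ce} and in the proof of Proposition~\ref{almostConnectivity}, but now working with the undirected notion of weak $2k$-connectivity and hence with pairs of edge-disjoint $k$-fans (via the Kaneko--Ota characterisation recalled in Section~\ref{pre}). First I would observe that the complete digraph on $N$ underlies a complete undirected graph, and since $|N| \geq 2k+2$ (it contains $L$, $M$, the $v_\x$'s and the $v^\C_\x$'s, and $k\geq 3$), $N$ is weakly $2k$-connected. Then, for each variable $\x$ and each clause $\C$ containing $\x$, I would add the vertices of the $(\C,\x)$-gadget one layer at a time: first $t^\C_\x$ and $u'^\C_\x$ (each of undirected degree $2k$, joined to $M$, to each other via $f^\C_\x$, along the circuit $\Delta_\x$, and to $v_\x$, $u'''^\C_\x$ respectively), then $v_\x$, then $v^\C_\x$ and $t^\C_\x$'s partners, and finally the four vertices of $U^\C_\x$ together with $M$. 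In each step the point is that the newly attached vertex (or small set) has degree exactly $2k$ and is joined by enough parallel edges and enough edge-disjoint paths through the already-constructed weakly $2k$-connected set to exhibit two edge-disjoint $k$-fans; then the ``adding a vertex'' and ``joining two sets'' facts from Section~\ref{pre} let me enlarge the weakly $2k$-connected set. Doing this for all gadgets shows $V \setminus W$ is weakly $2k$-connected.

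Next I would handle the vertices of type $w^\C$. For a clause $\C = (\x,\y,\z)$, the vertex $w^\C$ has undirected degree $2k$: it is joined to $L$ by $2(k-1)$ edges (from the complete digraph on $L\cup w^\C$) and carries the three special edges $e^\C_\x$, $e^\C_\y$, $e^\C_\z$ — wait, that is $2(k-1)+3 = 2k+1$ in the Eulerian-augmented graph, and $2k+1$ before augmentation too, so I should be slightly careful and use the exact incidences from the construction; in any case $w^\C$ has at least $2k$ edges to the already-built weakly $2k$-connected set $V\setminus W$, with the $k-1$ vertices of $L$ contributing two edge-disjoint paths each and the special edges contributing the rest. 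I would exhibit two edge-disjoint $k$-fans joining $w^\C$ and $V\setminus W$: one fan routes $k$ edge-disjoint paths using the $L$-edges and one special edge, the other routes $k$ edge-disjoint paths using the remaining $L$-edges and the other special edges (each $u^\C_\x$ reaching the weakly $2k$-connected core through its incident complete digraph on $M\cup (U^\C_\x\setminus u^\C_\x)$, which was already absorbed into $V\setminus W$ in the previous paragraph). Adding all the $w^\C$'s one at a time then gives that $V$ is weakly $2k$-connected, and since $|V|>k$, $G'_k(\Pi)$ is weakly $2k$-connected. The same argument applies verbatim to $G'_k+F$, since adding the extra edges of $F$ only increases connectivity.

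The main obstacle I anticipate is purely bookkeeping: getting the edge-disjointness right when the same small separators ($M$, $L$, the $v$'s) are reused across many gadgets, and making sure the $k$-fans in the ``joining two sets'' step genuinely avoid each other. The cleanest way around this is to never route through a gadget that hasn't yet been absorbed — i.e., fix a processing order (all $(\C,\x)$-gadgets along one circuit $\Delta_\x$, then the next variable, then the $w^\C$'s last) and at each step only claim fans that stay inside (new vertex) $\cup$ (already-absorbed set), so the Section~\ref{pre} lemmas apply mechanically. I expect no genuine difficulty beyond this, since every added vertex has degree exactly $2k$ by design and every gadget was built with precisely the local edge-multiplicities needed to supply two edge-disjoint $k$-fans; the verification is ``easy to see'' in the same sense as the analogous claims in Section~\ref{ce}, and I would write it at that level of detail rather than exhibiting every path explicitly.
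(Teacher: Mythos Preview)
Your plan is sound but misses a shortcut the paper takes. The paper's proof is two lines: since $D_k$ is itself a consistent reorientation of $D_k$, Proposition~\ref{almostConnectivity} already gives that $V\setminus W$ is $k$-connected in the digraph $D_k$; and whenever a vertex set is $k$-connected in a digraph, it is automatically weakly $2k$-connected in the underlying undirected graph (the $k$ internally-disjoint $u\to v$ dipaths and the $k$ internally-disjoint $v\to u$ dipaths furnish two edge-disjoint $k$-fans, so Kaneko--Ota applies). Then only the vertices $w^\C$ need to be attached, exactly as you do in your second paragraph.

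In other words, you are redoing the work of Proposition~\ref{almostConnectivity} from scratch in the undirected setting --- building $N$ up to $V\setminus W$ gadget by gadget with explicit pairs of edge-disjoint $k$-fans --- when that proposition (applied to the identity reorientation) hands you weak $2k$-connectivity of $V\setminus W$ for free. Your approach would work, modulo the bookkeeping you yourself flag (and the minor slip that $v_\x$ is already in $N$, so it need not be ``added''); but it is considerably longer and duplicates effort. The paper's route buys you a one-line reduction to a result already proved, at the cost of invoking the easy implication ``$k$-connected orientation $\Rightarrow$ weakly $2k$-connected'' which was stated in the introduction.
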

\begin{proof}
  By Proposition \ref{almostConnectivity}, $V \setminus W$ is
  $k$-connected in $D_k$, thus $V \setminus W$ is weakly $2k$-connected in $G'_k$. Since
  there exist $2$ edge-disjoint $k$-fans from $w^\C$ to $V \setminus W$ for
  every clause $\C$, $G'_k$ is weakly $2k$-connected.
\end{proof}

We now construct an Eulerian counterexample to Conjecture \ref{FrankC} for $k=3$.
Let $\x$ be a variable and $\C=(\x,\x)$ be a clause. Let $H'_3$ be the
Eulerian graph obtained from $G'_3(\{\C\})$ by adding an edge $u^\C_\x m$ in
each of the two copies of the $(\C,\x)$-gadget. The next result follows from the
discussion above.
\begin{prop}
  $H'_3$ is an Eulerian weakly $6$-connected graph that has no $3$-connected
  orientation.
\end{prop}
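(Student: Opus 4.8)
The plan is to obtain all three assertions by assembling results already proved, the crucial observation being that the clause $\C=(\x,\x)$ is \emph{never} satisfied in the Not-All-Equal sense: its two literals are identical, hence they receive the same value under every truth assignment, so $\Pi=\{\C\}$ is unsatisfiable.

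For the Eulerian property I would argue exactly as in the proof of Theorem~\ref{NPCtheo}. In $G'_k$ the only vertices of odd degree are of type $u^\C_\x$ and $w^\C$; but for the two-literal clause $\C=(\x,\x)$ the vertex $w^\C$ is incident only to the arcs of the complete digraph on $L\cup w^\C$ and to the two special arcs $e^\C_\x$, so $d(w^\C)=2(k-1)+2=2k$ is even. Hence the only odd-degree vertices of $G'_3(\{\C\})$ are the two copies of $u^\C_\x$, and adding the two edges $u^\C_\x m$ (one in each $(\C,\x)$-gadget) increases $d(m)$ by $2$ and each $d(u^\C_\x)$ by $1$, so every degree becomes even and $H'_3$ is Eulerian. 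For weak $6$-connectivity, note that $G'_3(\{\C\})$ is weakly $6$-connected by the proposition asserting that every $G'_k(\Pi)$ is weakly $2k$-connected, and weak $2k$-connectivity is preserved under adding edges: for any pair $u,v$ and any $U,F$ with $2|U|+|F|<2k$, the graph obtained from $H'_3$ by deleting $U$ and $F$ contains a graph of the form (subgraph of $G'_3(\{\C\})$ with $U$ and at most $F$ deleted), in which $u,v$ are already connected. Hence $H'_3$ is weakly $6$-connected.

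For the absence of a $3$-connected orientation I would invoke Propositions~\ref{variableProp} and~\ref{clauseProp} together with the remark made in the proof of Theorem~\ref{NPCtheo} that adding arcs of type $u^\C_\x m$ to $D_k$ leaves those two propositions valid for any orientation of the new arcs. Consequently $H'_3$ admits a $3$-connected orientation if and only if $\Pi=\{\C\}$ is Not-All-Equal satisfiable, which it is not; hence $H'_3$ has no $3$-connected orientation.

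The only genuinely new verification — and the main, though mild, obstacle — is to confirm that the added edges $u^\C_\x m$ do not disturb the earlier arguments. These edges are incident only to $u^\C_\x$ and to $m\in M$, so they disappear in $D'-(M\cup t^\C_\x)$ and the count of arcs crossing the boundary of $U^\C_\x$ in the proof of Proposition~\ref{variableProp} is unchanged; they are not incident to any vertex $w^\C$, so the analysis of $D'-L$ around $w^\C$ and of condition $(\star)$ in the proof of Proposition~\ref{clauseProp} is untouched; and since adding arcs can only increase $k$-connectivity, Proposition~\ref{almostConnectivity} still applies. Together with the observation above that $w^\C$ already has even degree $2k$ for $\C=(\x,\x)$, this shows $H'_3$ is well defined and has all the claimed properties.
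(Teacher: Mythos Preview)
Your argument is correct and is exactly the elaboration the paper intends by ``follows from the discussion above'': you use that $\Pi=\{(\x,\x)\}$ is Not-All-Equal unsatisfiable together with Propositions~\ref{variableProp}, \ref{clauseProp} and the proposition that $G'_k(\Pi)$ is weakly $2k$-connected, and you correctly observe that for a two-literal clause the vertex $w^\C$ already has even degree $2k$, so only the two edges $u^\C_\x m$ are needed to make the graph Eulerian. Your check that these added edges leave the proofs of Propositions~\ref{variableProp} and~\ref{clauseProp} intact mirrors the remark in the proof of Theorem~\ref{NPCtheo}.
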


\section*{Acknowledgement}

I thank Joseph Cheriyan for inviting me to the University of Waterloo and for
his profitable discussions. I thank Zolt\'an Szigeti who observed that the graph
$H_3$ obtained from $G_3$ by complete splitting-off on $t_a$ and on $t_b$ is a
smaller counterexample.  I thank both of them and Abbas Mehrabian for careful
reading of the manuscript.

\bibliographystyle{plain}
\bibliography{../../biblio/biblio}

\begin{thebibliography}{10}

\bibitem{BergJordan2006}
A.~R. Berg and T.~Jordán.
\newblock Two-connected orientations of {E}ulerian graphs.
\newblock {\em Journal of Graph Theory}, 52(3):230--242, 2006.

\bibitem{CheriyanDdGSzigeti}
J.~Cheriyan, O.~Durand~de Gevigney, and Z.~Szigeti.
\newblock Packing of rigid spanning subgraphs and spanning trees.
\newblock Submitted to \emph{Journal of Combinatorial Theory, Series B}.

\bibitem{FordFulkerson1962}
L.~R. Ford and D.~R. Fulkerson.
\newblock {\em Flows in Networks}.
\newblock Princeton Univ. Press, 1962.

\bibitem{Frank1995}
A.~Frank.
\newblock Connectivity and network flows.
\newblock In {\em Handbook of Combinatorics}, pages 111--177. MIT Press, 1995.

\bibitem{Jordan2005}
T.~Jord{\'a}n.
\newblock On the existence of k edge-disjoint 2-connected spanning subgraphs.
\newblock {\em Journal of Combinatorial Theory, Series B}, 95(2):257--262,
  2005.

\bibitem{Kaneko_Ota2000}
A.~Kaneko and K.~Ota.
\newblock On minimally $(n,\lambda)$-connected graphs.
\newblock {\em Journal of Combinatorial Theory, Series B}, 80(1):156 -- 171,
  2000.

\bibitem{KiralySzigeti2006}
Z.~Kir\'{a}ly and Z.~Szigeti.
\newblock Simultaneous well-balanced orientations of graphs.
\newblock {\em Journal of Combinatorial Theory, Series B}, 96(5):684--692,
  2006.

\bibitem{Menger1927}
K.~Menger.
\newblock Zur allgemeinen kurventheorie.
\newblock {\em Fundamental Mathematics}, pages 96--115, 1927.

\bibitem{NashWilliams1960}
C.~St. J.~A. Nash-Williams.
\newblock On orientations, connectivity and odd-vertex-pairings in finite
  graphs.
\newblock {\em Canadian Journal Mathematics}, pages 555--567, 1960.

\bibitem{Robbins1939}
H.~E. Robbins.
\newblock A theorem on graphs, with an application to a problem of traffic
  control.
\newblock {\em The American Mathematical Monthly}, 46(5):pp. 281--283, 1939.

\bibitem{Schaefer1978}
T.~J. Schaefer.
\newblock {The complexity of satisfiability problems}.
\newblock In {\em ACM Symposium on Theory of Computing}, pages 216--226, 1978.

\bibitem{Thomassen1989}
C.~Thomassen.
\newblock Configurations in graphs of large minimum degree, connectivity, or
  chromatic number.
\newblock {\em Annals of the New York Academy of Sciences}, 555(1):402--412,
  1989.

\end{thebibliography}

\end{document}